\documentclass[a4paper,11pt,centertags,reqno,psamsfonts,draft]{amsart}

\usepackage{letltxmacro}        
\usepackage{ifthen}		        
\usepackage[headings]{fullpage} 
\usepackage{setspace}		    
\usepackage[inline]{enumitem}   
\usepackage{array}				
\usepackage{fancybox}		    
\usepackage{graphicx}           
\usepackage{subfigure}		    
\usepackage{amsmath}	        
\usepackage{amsthm}				
\usepackage{amssymb}				
\usepackage{mathtools}			
\usepackage{mathrsfs}			
\usepackage{stmaryrd}			
\usepackage{yhmath}				
\usepackage{accents}			
\usepackage{extarrows}          
\usepackage{xfrac}              
\usepackage{url}
\usepackage[all]{xy}			
\usepackage{comment}
\usepackage[round,comma,authoryear,sort&compress]{natbib} 

\usepackage{enumitem}

\newcommand{\N}{\mathbb{N}}



\newcommand{\C}[2]{
\ifthenelse{#1=0 \and #2=0}{\textsf{\upshape C}}
{\ifthenelse{#2=0}{\textsf{\upshape C}^{#1}}
{\textsf{\upshape C}^{#1,#2}}}
}


\renewcommand{\d}{\mathrm{d}}



\newcommand{\E}{\textsf{\upshape E}}

\renewcommand{\P}{\textsf{\upshape P}}

\newcommand{\indicator}[1]{\mathbf{1}_{#1}}

\newcommand{\filt}[1]{\mathfrak{#1}}
\newcommand{\sigalg}[1]{\mathscr{#1}}

\newcommand{\lc}{[\![}
\newcommand{\rc}{]\!]}

\makeatletter
\let\oldr@@t\r@@t
\def\r@@t#1#2{%
\setbox0=\hbox{$\oldr@@t#1{#2\,}$}\dimen0=\ht0
\advance\dimen0-0.2\ht0
\setbox2=\hbox{\vrule height\ht0 depth -\dimen0}%
{\box0\lower0.4pt\box2}}
\LetLtxMacro{\oldsqrt}{\sqrt}
\renewcommand*{\sqrt}[2][\ ]{\oldsqrt[#1]{#2}}
\makeatother

\theoremstyle{plain}
\newtheorem{theorem}{Theorem}

\newtheorem{proposition}[theorem]{Proposition}
\newtheorem{corollary}[theorem]{Corollary}
\theoremstyle{definition}

\newtheorem{example}[theorem]{Example}

\theoremstyle{remark}

\numberwithin{equation}{section}
\numberwithin{figure}{section}
\numberwithin{table}{section}

\graphicspath{{Figures/}}

\bibpunct{(}{)}{,}{a}{}{,}

\allowdisplaybreaks[4]

\begin{document}
\title{Piecewise constant local martingales with bounded numbers of jumps}

\author{Johannes Ruf}

\address{Johannes Ruf\\
Department of Mathematics\\
London School of Economics and Political Science}

\email{j.ruf@lse.ac.uk}

\thanks{I thank Olga Becciv, Alekos Cecchin, Giorgia Callegaro, Martino Grasselli, and Wolfgang Runggaldier for helpful discussions on the subject matter of this note.
I am grateful for the generous support provided by the Oxford-Man Institute of Quantitative Finance at the University of Oxford.}

\subjclass[2010]{Primary: 60G42; 60G44}

\keywords{}

\date{\today}

\begin{abstract} 
A piecewise constant local martingale $M$ with boundedly many jumps is a uniformly integrable martingale if and only 
if $M_\infty^-$ is integrable. 
\end{abstract}

\maketitle

\section{Main theorem}
Let $(\Omega, \sigalg F, (\sigalg F_t)_{t \geq 0}, \P)$ denote a filtered probability space with $\bigcup_{t \geq 0} \sigalg F_t \subset \sigalg F$.
In Section~\ref{S:proof}, we shall prove the following theorem.
\begin{theorem}\label{T:finite1}
	Assume for some $N \in \N_0$ and some stopping times $0 \leq \rho_1 \leq \cdots \leq \rho_N$ we have a local martingale $M$ of the form
	\begin{align} \label{eq:160407.1}
		M = \sum_{m = 1}^N J_m \indicator{\lc \rho_m, \infty\lc}, \qquad \text{that is,}\qquad M_t = \sum_{m = 1}^N J_m \indicator{\{t \geq \rho_m\}}, \quad t \geq 0,
	\end{align}	
	where $J_m$ is $\sigalg F_{\rho_m}$--measurable for each $m = 1, \cdots, N$.  If
	\begin{align} \label{eq:160406.11}
		\E\left[\liminf_{t \uparrow \infty} M_t^-\right]  < \infty
	\end{align}
	then $M$ is a uniformly integrable martingale.
\end{theorem}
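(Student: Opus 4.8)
I would first make three harmless reductions. Since $M$ has at most $N$ jumps, at the times $\rho_1,\dots,\rho_N$, and is constant between them, $M_\infty:=\lim_{t\uparrow\infty}M_t=\sum_{m=1}^N J_m\indicator{\{\rho_m<\infty\}}$ exists, $\liminf_{t\uparrow\infty}M_t^-=M_\infty^-$, and \eqref{eq:160406.11} reads $\E[M_\infty^-]<\infty$. Fixing stopping times $\tau_k\uparrow\infty$ with $M^{\tau_k}$ a uniformly integrable martingale and replacing $\tau_k$ by $\tau_k\wedge k$ (a martingale stopped at a deterministic time is uniformly integrable), we may assume each $\tau_k$ is finite, so $M^{\tau_k}_\infty=M_{\tau_k}\in\Lp{1}$ and $M_{t\wedge\tau_k}=\E[M_{\tau_k}\mid\sigalg F_t]$. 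Finally, $M_0\indicator{\lc 0,\infty\lc}$ is a uniformly integrable martingale (it is constant, with $M_0\in\Lp{1}$ since $M$ is a local martingale), and $M-M_0\indicator{\lc 0,\infty\lc}=\sum_{m=1}^N(J_m\indicator{\{\rho_m>0\}})\indicator{\lc\rho_m,\infty\lc}$ is again of the form \eqref{eq:160407.1}, has vanishing initial value, and still satisfies \eqref{eq:160406.11}; so we may assume $M_0=0$. The plan is then to induct on $N$, peeling off the first jump.

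The base case is $N\le1$. If $N=0$ then $M\equiv0$. If $N=1$, write $M=J\indicator{\lc\rho,\infty\lc}$; since $M_0=0$ and $\tau_k$ is finite, $M_{\tau_k}=J\indicator{\{\rho\le\tau_k\}}$, so $M_{\tau_k}^\pm\le J^\pm\indicator{\{\rho<\infty\}}=M_\infty^\pm$ while $M_{\tau_k}^+\uparrow M_\infty^+$. As $\E[M_{\tau_k}]=\E[M_0]=0$, monotone convergence gives $\E[M_\infty^+]=\lim_k\E[M_{\tau_k}^+]=\lim_k\E[M_{\tau_k}^-]\le\E[M_\infty^-]<\infty$, so $M_\infty\in\Lp{1}$. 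Then $\{M_{\tau_k}\}_k$ is dominated by $|M_\infty|$ and $M_{\tau_k}\to M_\infty$ a.s., hence $M_{\tau_k}\to M_\infty$ in $\Lp{1}$; passing to the limit in $M_{t\wedge\tau_k}=\E[M_{\tau_k}\mid\sigalg F_t]$ yields $M_t=\E[M_\infty\mid\sigalg F_t]$, so $M$ is a uniformly integrable martingale. This one-jump statement is the engine of the whole proof.

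For the inductive step, let $N\ge2$ and assume the theorem for all smaller numbers of jumps. Put $\overline M:=M^{\rho_1}$ and $N':=M-\overline M$. Because $M_0=0$, $M_t=0$ for $t<\rho_1$, so $\overline M=M_{\rho_1}\indicator{\lc\rho_1,\infty\lc}$ is a local martingale of the form \eqref{eq:160407.1} with a single jump ($M_{\rho_1}$ is $\sigalg F_{\rho_1}$-measurable, $\overline M_0=0$), while $N'=\sum_{m=2}^N(J_m\indicator{\{\rho_m>\rho_1\}})\indicator{\lc\rho_m,\infty\lc}$ is a local martingale of the form \eqref{eq:160407.1} with $N-1$ jumps and $N'_\infty=M_\infty-M_{\rho_1}\indicator{\{\rho_1<\infty\}}$. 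The one point that needs work is the claim that $M_{\rho_1}\indicator{\{\rho_1<\infty\}}\in\Lp{1}$. For the negative part, put $A:=\{M_{\rho_1}<0\}\cap\{\rho_1<\infty\}\in\sigalg F_{\rho_1}$ and let $\rho:=\rho_1$ on $A$, $\rho:=\infty$ on $\complement{A}$ --- a stopping time since $A\in\sigalg F_{\rho_1}$. Then $\indicator A N'=M-M^{\rho}$ is again a local martingale of the form \eqref{eq:160407.1} with $N-1$ jumps (its coefficients $J_m\indicator{\{\rho_m>\rho_1\}}\indicator A$ are $\sigalg F_{\rho_m}$-measurable). On $A$ we have $N'_\infty=M_\infty-M_{\rho_1}>M_\infty\ge-M_\infty^-$, hence $(\indicator A N')_\infty^-\le M_\infty^-$ is integrable; by the induction hypothesis $\indicator A N'$ is a uniformly integrable martingale, so $\indicator A N'_\infty\in\Lp{1}$. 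Since on $A$ moreover $(M_{\rho_1})^-=N'_\infty-M_\infty\le|N'_\infty|+M_\infty^-$, we get $\E[(M_{\rho_1})^-\indicator{\{\rho_1<\infty\}}]=\E[(M_{\rho_1})^-\indicator A]<\infty$, i.e.\ $\E[(\overline M_\infty)^-]<\infty$; so the base case makes $\overline M$ a uniformly integrable martingale, whence $M_{\rho_1}\indicator{\{\rho_1<\infty\}}=\overline M_\infty\in\Lp{1}$. Consequently $(N'_\infty)^-\le M_\infty^-+(M_{\rho_1})^+\indicator{\{\rho_1<\infty\}}\in\Lp{1}$, so the induction hypothesis applies to $N'$ as well, and $M=\overline M+N'$ is a uniformly integrable martingale.

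The difficulty I expect to be central is exactly this integrability of the intermediate value $M_{\rho_1}$: an $M$ of this form may drop far below $0$ and recover, so $\E[(M_{\rho_1})^-]$ is not controlled by $\E[M_\infty^-]$ in any direct way, and $M_{\rho_1}$ need not a priori lie in $\Lp{1}$. The device that breaks this is to restrict the post-$\rho_1$ local martingale $N'$ to the $\sigalg F_{\rho_1}$-set $\{M_{\rho_1}<0\}$: there the future increment $N'_\infty=M_\infty-M_{\rho_1}$ lies above $M_\infty$, so its terminal negative part is integrable, and the induction hypothesis --- one fewer jump --- forces $N'_\infty$, hence $M_{\rho_1}=M_\infty-N'_\infty$, to be integrable on that set; the positive part of $M_{\rho_1}$ then comes for free from the one-jump base case applied to $\overline M=M^{\rho_1}$. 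The remaining ingredients are routine: optional stopping of uniformly integrable martingales, the one-jump base case itself, and the standard bookkeeping that $M_{\rho_1}$, $\indicator A$, and $\indicator{\{\rho_m>\rho_1\}}$ lie in the appropriate $\sigma$-fields.
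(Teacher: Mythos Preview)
Your proof is correct. Like the paper's second proof, you argue by induction on the number of jumps, but the inductive mechanism is organized differently. The paper (Proof~II) fixes a localization sequence $(\tau_n)$ and shows, via the induction hypothesis applied to $(M-M^{\tau_k})\indicator{\{\tau_k\ge\rho_1\}}$, that the modified sequence $\widehat\tau_n=\tau_n\indicator{\{\tau_n<\rho_1\}}+\infty\,\indicator{\{\tau_n\ge\rho_1\}}$ also localizes $M$; this is useful because $M^{\widehat\tau_n}_\infty\in\{0,M_\infty\}$, so $(M^{\widehat\tau_n}_\infty)^-\le M_\infty^-$, and uniform integrability of $M$ then drops out of Fatou and dominated convergence without ever decomposing $M$. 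You instead split $M=\overline M+N'$ with $\overline M=M^{\rho_1}$ (one jump) and $N'$ (at most $N-1$ jumps), and confront directly the integrability of $(M_{\rho_1})^-\indicator{\{\rho_1<\infty\}}$. Your device --- restricting $N'$ to the $\sigalg F_{\rho_1}$-set $A=\{M_{\rho_1}<0,\ \rho_1<\infty\}$, so that $(\indicator_A N')_\infty^-\le M_\infty^-$ and the induction hypothesis applies --- is a clean way to bootstrap the needed bound, and it makes the role of the single-jump base case very explicit. The paper's route never needs to control $M_{\rho_1}$ and works purely with localization sequences; yours never needs to build a new localization sequence but pays with the auxiliary stopping time $\rho=\rho_1\indicator_A+\infty\,\indicator_{\complement A}$. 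Both rest on the same underlying idea that stripping one jump feeds the induction; they simply strip it in different places. (The paper also supplies a first proof that reduces everything to the discrete-time result of Jacod and Shiryaev, which is a different line altogether.)
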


	In \eqref{eq:160406.11}, we could replace the limit inferior by a limit since $M$ only has finitely many jumps and hence converges to a random variable $M_\infty$. Hence, \eqref{eq:160406.11} is equivalent to $\E[ M_\infty^-]    < \infty$.

\begin{corollary} \label{C:finite1}
	Suppose the notation and assumptions of Theorem~\ref{T:finite1} hold, but with \eqref{eq:160406.11} replaced by 
	\begin{align*} 
		\E\left[M_t^-\right]  < \infty, \qquad t \geq 0.
	\end{align*}
	Then $M$ is a  martingale.
\end{corollary}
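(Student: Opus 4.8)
The plan is to deduce the corollary from Theorem~\ref{T:finite1} by a truncation at deterministic times. Fix $T \geq 0$ and consider the stopped process $M^T := (M_{t \wedge T})_{t \geq 0}$. As a stopped local martingale, $M^T$ is again a local martingale, and from \eqref{eq:160407.1} we read off
\begin{align*}
	M^T_t = \sum_{m=1}^N \bigl(J_m \indicator{\{\rho_m \leq T\}}\bigr)\, \indicator{\{t \geq \rho_m\}}, \qquad t \geq 0,
\end{align*}
so $M^T$ is of the form \eqref{eq:160407.1} with the same ordered stopping times $\rho_1 \leq \cdots \leq \rho_N$ and with jump sizes $J_m \indicator{\{\rho_m \leq T\}}$. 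Since $T$ is deterministic, $\{\rho_m \leq T\} \in \sigalg F_{\rho_m}$, so these modified jump sizes are still $\sigalg F_{\rho_m}$--measurable; this elementary measurability check is really the only point requiring any care.

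Next I would verify hypothesis \eqref{eq:160406.11} for $M^T$. Since $M^T$ is constant on $[T,\infty)$, we have $\liminf_{t \uparrow \infty}(M^T_t)^- = M_T^-$, and $\E[M_T^-] < \infty$ by assumption. Hence Theorem~\ref{T:finite1} applies to $M^T$ and shows that $M^T$ is a uniformly integrable martingale; in particular $M_{t \wedge T} \in \Lp{1}$ and $\E[M_{t \wedge T} \mid \sigalg F_s] = M_{s \wedge T}$ for all $0 \leq s \leq t$.

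Finally I would let $T$ vary. Taking $T = t$ shows $M_t \in \Lp{1}$ for every $t \geq 0$. Taking any $T \geq t$ in the martingale identity for $M^T$ gives $\E[M_t \mid \sigalg F_s] = M_s$ for all $0 \leq s \leq t$. As $t$ was arbitrary, $M$ is a martingale, which is the claim. I do not expect a genuine obstacle here: the entire substance is contained in Theorem~\ref{T:finite1}, and the corollary is precisely the observation that, although \eqref{eq:160406.11} may fail globally under the weaker assumption $\E[M_t^-] < \infty$, it does hold for every finite-horizon truncation $M^T$.
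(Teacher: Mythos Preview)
Your proposal is correct and follows essentially the same route as the paper's proof: stop $M$ at a deterministic time $T$, observe that $M^T$ is of the form \eqref{eq:160407.1} with $J_m$ replaced by $J_m\indicator{\{\rho_m\le T\}}$, apply Theorem~\ref{T:finite1}, and let $T$ range over all nonnegative reals. You have simply spelled out a few details (the measurability of the modified jumps and the identification $\liminf_{t\uparrow\infty}(M^T_t)^-=M_T^-$) that the paper leaves implicit.
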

\begin{proof}
	Fix a deterministic time $T \geq 0$ and consider the local martingale $\widetilde M = M^T$; that is, $\widetilde M $ is the local martingale $M$ stopped at time $T$.   Then $\widetilde M$ satisfies the conditions of Theorem~\ref{T:finite1}, with $J_m$ replaced by $J_m \indicator{\{\rho_m \leq T\}}$ for each $m = 1, \cdots, N$.  Hence, $\widetilde M$ is a uniformly integrable martingale. Since $T$ was chosen arbitrarily the assertion follows.
\end{proof}

\citet{Jacod:Shiryaev:1998} prove the following special case of Theorem~\ref{T:finite1}.
\begin{proposition}  \label{P:JS98}
	Fix $N \in \N_0$ and assume we have a discrete-time filtration $\filt G = (\sigalg G_m)_{m = 0, 1, \cdots, N}$ and a $\filt G$--local martingale $Y = (Y_m)_{m = 0, 1, \cdots, N}$. If $\E[Y_N^-] < \infty$ then $Y$ is a $\filt G$--uniformly integrable martingale.
\end{proposition}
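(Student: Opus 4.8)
The plan is to work with a single localizing sequence for $Y$ and transport integrability along the finite time axis --- first backward, to control negative parts, then forward, to obtain integrability and the martingale identity. Let $(\tau_k)_{k \in \N}$ be a nondecreasing localizing sequence for $Y$, so that each $Y^{\tau_k}$ is a $\filt G$--martingale and $\indicator{\{\tau_k \geq N\}} \uparrow 1$ almost surely; then $\indicator{\{\tau_k \geq m\}} \uparrow 1$ almost surely for every $m \leq N$, and $Y_0$ is integrable (it equals $Y^{\tau_k}_0$). The identity I would extract and use throughout is that for every $m \in \{0,\dots,N-1\}$ and every $k$ the random variable $\indicator{\{\tau_k \geq m+1\}} Y_{m+1}$ is integrable and
\[
	\E\bigl[ \indicator{\{\tau_k \geq m+1\}} Y_{m+1} \bigm| \sigalg G_m \bigr] = \indicator{\{\tau_k \geq m+1\}} Y_m .
\]
This holds because $\indicator{\{\tau_k \geq m+1\}} Y_{m+1} = \indicator{\{\tau_k \geq m+1\}} Y^{\tau_k}_{m+1}$ with $Y^{\tau_k}_{m+1}$ integrable, because $\{\tau_k \geq m+1\} = \complement{\{\tau_k \leq m\}} \in \sigalg G_m$, and because $Y^{\tau_k}_m = Y_m$ on $\{\tau_k \geq m+1\}$; it is just the martingale property of $Y^{\tau_k}$ with an $\sigalg G_m$--measurable indicator pulled out.

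Next I would prove by downward induction on $m$ that $\E[Y_m^-] < \infty$ for all $m$, the base case $m = N$ being the hypothesis. For the inductive step, assume $\E[Y_{m+1}^-] < \infty$, so that $\E[Y_{m+1}^- \mid \sigalg G_m]$ is almost surely finite. The identity gives, on $\{\tau_k \geq m+1\}$, that $Y_m \geq -\E[Y_{m+1}^- \mid \sigalg G_m]$; letting $k \to \infty$ this holds almost surely, i.e. $Y_m^- \leq \E[Y_{m+1}^- \mid \sigalg G_m]$ almost surely, and taking expectations $\E[Y_m^-] \leq \E[Y_{m+1}^-] < \infty$.

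Then I would prove by upward induction on $m$ that each $Y_m$ is integrable, the base case being the integrability of $Y_0$. Assuming $Y_m$ integrable, I split the identity into positive and negative parts and take expectations to get, for each $k$,
\[
	\E\bigl[ \indicator{\{\tau_k \geq m+1\}} Y_{m+1}^+ \bigr] = \E\bigl[ \indicator{\{\tau_k \geq m+1\}} Y_m \bigr] + \E\bigl[ \indicator{\{\tau_k \geq m+1\}} Y_{m+1}^- \bigr] .
\]
Letting $k \to \infty$ --- monotone convergence on the two extreme terms, dominated convergence (by $|Y_m|$) on the middle one --- yields $\E[Y_{m+1}^+] = \E[Y_m] + \E[Y_{m+1}^-]$, which is finite by the previous paragraph, so $Y_{m+1}$ is integrable. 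Once all the $Y_m$ are integrable, every conditional expectation appearing in the displayed identity is almost surely finite, and a final passage to the limit $k \to \infty$ there gives $\E[Y_{m+1} \mid \sigalg G_m] = Y_m$. Hence $Y$ is a $\filt G$--martingale indexed by the finite set $\{0,\dots,N\}$, so $Y_m = \E[Y_N \mid \sigalg G_m]$ for each $m$ and $Y$ is uniformly integrable.

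The one genuine difficulty --- and the step I expect to require the most care --- is that a priori neither $Y_{m+1}$ nor $\E[Y_{m+1}^{\pm} \mid \sigalg G_m]$ is known to be finite, so $\E[Y_{m+1} \mid \sigalg G_m]$ cannot be manipulated directly; the truncations $\indicator{\{\tau_k \geq m+1\}}$ are exactly what make every quantity integrable at fixed $k$, and the terminal hypothesis $\E[Y_N^-] < \infty$ must be propagated backward (to bound negative parts) before integrability can be propagated forward. Keeping that two--directional bookkeeping straight, together with the care needed for conditional expectations valued a priori in $[0,\infty]$, is where the work lies.
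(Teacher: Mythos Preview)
Your argument is correct. The key identity is justified exactly as you say, the backward induction on $\E[Y_m^-]$ goes through because $\indicator{\{\tau_k\ge m+1\}}\uparrow 1$, the forward induction on integrability uses monotone convergence on the positive and negative parts and dominated convergence (by $|Y_m|$) on the middle term, and the final passage to $\E[Y_{m+1}\mid\sigalg G_m]=Y_m$ is conditional dominated convergence with dominator $|Y_{m+1}|$. Uniform integrability is then automatic since the index set is finite.

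This is, however, a genuinely different route from the one the paper takes. The paper does not prove Proposition~\ref{P:JS98} directly; it derives it from Theorem~\ref{T:finite1} by embedding the discrete-time process into continuous time via $M_t=Y_{[t]\wedge N}$, $\sigalg F_t=\sigalg G_{[t]\wedge N}$, and then invoking the induction argument (Proof~II) for Theorem~\ref{T:finite1}. That induction is over the number $N$ of jumps: at each step one manufactures an improved localizing sequence $(\widehat\tau_n)$ with the property $M_{\widehat\tau_n}\in\{0,M_\infty\}$, which forces $\E[(M^{\widehat\tau_n}_\infty)^-]\le\E[M_\infty^-]$ and yields integrability through a Fatou-type chain. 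Your proof stays entirely in discrete time and uses a single fixed localizing sequence together with a two-pass induction along the time axis --- backward to propagate the bound on negative parts, forward to propagate integrability. This is essentially the original Jacod--Shiryaev argument and is the more elementary and self-contained approach for the discrete statement; the paper's route is more circuitous here but has the advantage of treating the discrete case as a corollary of the general piecewise-constant continuous-time result.
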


Note that Proposition~\ref{P:JS98} follows from Theorem~\ref{T:finite1}. Indeed, define the continuous-time process $M$ and the filtration $(\sigalg F_t)_{t \geq 0}$ by $M_t = Y_{[t] \wedge n}$  and $\sigalg F_t = \sigalg G_{[t] \wedge n}$, respectively, where $[t]$ denotes the largest integer smaller than or equal to $t$.  Then $M$ is a local martingale as in \eqref{eq:160407.1}, with $N$ replaced by $N+1$. To see this,  set $\rho_m = m-1$ and $J_m = Y_{m-1} - Y_{m-2}$ with $Y_{-1} := 0$, for each $m = 1, \cdots, N+1$. Applying Theorem~\ref{T:finite1} then yields Proposition~\ref{P:JS98}.

\section{Proofs of Theorem~\ref{T:finite1}} \label{S:proof}

In the following, we will provide two proofs of Theorem~\ref{T:finite1}. The first one assumes Proposition~\ref{P:JS98} is already shown and reduces the more general situation of Theorem~\ref{T:finite1}  to the discrete-time setup of Proposition~\ref{P:JS98}. The second proof does not assume Proposition~\ref{P:JS98}, but instead provides a direct argument based on an induction.

\begin{proof}[Proof~I, relying on Proposition~\ref{P:JS98}]  
Let us set $\rho = 0$ and $\rho_{N+1} = \infty$ and let $(\tau_n)_{n \in \N}$ denote a localization sequence of $M$ such that $M^{\tau_n}$ is a uniformly integrable martingale for each $n \in \N$.
For any stopping time $\tau$ we may define a sigma algebra
\begin{align*}
	\sigalg F_{\tau-} = \sigma\left( \left\{ A \cap \{t<\tau\},\, A \in \sigalg F_t,\, t \geq 0\right\} \cup \sigalg F_0\right).
\end{align*}
Note that $\{\tau = \infty\} = \bigcap_{n \in \N} \{n < \tau \} \in \sigalg F_{\tau-}$.

Let us now define a filtration $\filt G = (\sigalg G_m)_{m = 0, \cdots, N}$ and a process $Y = (Y_m)_{m = 0, 1, \cdots, N}$ by $\sigalg G_m = \sigalg F_{\rho_m} \vee\sigalg F_{\rho_{m+1}-}$ and $Y_m = M_{\rho_m}$, respectively.  Note that $Y$ is adapted to $\filt G$. Next, let us define a sequence $(\sigma_n)_{n \in \N}$ of random times, each taking values in $\{0, \cdots, N-1, \infty\}$ by
\begin{align*}
	\sigma_n = \sum_{m=0}^{N-1} m \indicator{\{\rho_m\leq \tau_n < \rho_{m+1} < \infty\}} + \infty \indicator{ \bigcup_{m = 0}^{N} \{\rho_m \leq \tau_n\} \cap  \{\rho_{m+1} = \infty\}}.
\end{align*}
Then, $\sigma_n$ is a $\filt G$--stopping time for each $n \in \N$ since
\begin{align*}
	\{\sigma_n = m\} &= 	\{\rho_m\leq \tau_n  < \rho_{m+1} < \infty\} \in \sigalg F_{\rho_m} \vee\sigalg F_{\rho_{m+1}-} = \sigalg G_m, \qquad m = 0, \cdots, N-1,
\end{align*}
and, furthermore, $\lim_{n \uparrow \infty} \sigma_n = \infty$.

We now fix $n \in \N$ and  prove that $Y^{\sigma_n}$ is a $\filt G$--martingale, which then yields that $Y$ is a $\filt G$--local martingale. To this end, we have, for each $m = 0, \cdots, N$,
\begin{align*}
	Y_m^{\sigma_n} &=  \sum_{k = 0}^{N-1} M_{\rho_{m \wedge k}} \indicator{\{\sigma_n = k\}} +  
			 M_{\rho_{m}} \indicator{\{\sigma_n = \infty\}} \\
			 &=  \sum_{k = 0}^{N-1} M_{\rho_{m \wedge k}}  \indicator{\{\rho_k\leq \tau_n  < \rho_{k+1} < \infty\}}  +  
			 M_{\rho_{m}}  \indicator{ \bigcup_{k = 0}^{N} \{\rho_k \leq \tau_n\} \cap  \{\rho_{k+1} = \infty\}}\\
			 &= M^{\tau_n}_{\rho_m},
\end{align*}
yielding
$
	\E[|Y_m^{\sigma_n}|]  < \infty.
$
Now, fix $m= 1, \cdots, N$. First, for any $A \in \sigalg F_{\rho_m-1}$, we have
\begin{align*}
	\E[Y_m^{\sigma_n} \indicator{A}]  = \E[M^{\tau_n}_{\rho_m} \indicator{A}] = \E[M^{\tau_n}_{\rho_{m-1}}\indicator{A}] = \E[Y_{m-1}^{\sigma_n} \indicator{A}];
\end{align*}
next, for any $t \geq 0$ and $A \in \sigalg F_t$, we have
\begin{align*}
	\E[Y_m^{\sigma_n} \indicator{A \cap \{t < \rho_m\}}]  &= \E[M^{\tau_n}_{\rho_m} \indicator{A \cap \{t < \rho_m\}}] = \E[M^{\tau_n}_{t} \indicator{A \cap \{t < \rho_m\}}]  = \E[M^{\tau_n}_{\rho_m-1} \indicator{A \cap \{t < \rho_m\}}] \\
	&= \E[Y_{m-1}^{\sigma_n} \indicator{A \cap \{t < \rho_m\}}], 
\end{align*}
yielding that $\E[Y_m^{\sigma_n} \indicator{A}]  = \E[Y_{m-1}^{\sigma_n} \indicator{A}]$ for all $A \in \sigalg G_{m-1}$. Hence, $Y$ is indeed a $\filt G$--local martingale.

The assumptions of the theorem yield that $\E[Y^-_N] < \infty$; hence $Y$ a $\filt G$--uniformly integrable martingale by Proposition~\ref{P:JS98}. Now, fix $t \geq 0$ and $A \in \sigalg F_t$. Then we get $\E[|M_t|] + \E[|M_\infty|] \leq 2 \sum_{m=0}^N \E[|N_m|] < \infty$ and  
\begin{align*}
	\E[M_\infty \indicator{A}] &= \sum_{m = 0}^N \E[Y_N \indicator{A \cap \{\rho_m \leq t < \rho_{m+1}\}}]
		= \sum_{m = 0}^N \E[Y_m \indicator{A \cap \{\rho_m \leq t < \rho_{m+1}\}}]\\
		&= \sum_{m = 0}^N \E[M_t \indicator{A \cap \{\rho_m \leq t < \rho_{m+1}\}}] = \E[M_t \indicator{A}]
\end{align*}
since $A \cap \{\rho_m \leq t < \rho_{m+1}\} \in \sigalg G_m$ for each $m = 0, \cdots, N$. Hence, $M$ is indeed a uniformly integrable martingale.
\end{proof}

\begin{proof}[Proof~II, relying on an induction argument]
	We proceed by induction over $N$. The case $N=0$ is clear.  Hence, let us assume the assertion is proven for some $N \in \N_0$ and consider the assertion with $N$ replaced by $N+1$.  Let $(\tau_n)_{n \in \N}$ denote a corresponding localization sequence such that $M^{\tau_n}$ is a uniformly integrable martingale for each $n \in \N$.
		
	\emph{Step~1}: In the first step, we want to argue that 
	the nondecreasing sequence $(\widehat \tau_n)_{n \in \N}$, given by
	\begin{align*}
		\widehat \tau_n = \tau_n \indicator{\{\tau_n < \rho_1\}} + \infty  \indicator{\{\tau_n \geq \rho_1\}} \geq \tau_n, 
	\end{align*}
	is also a localization sequence for $M$.  To this end, fix $k \in \N$ and consider the process
	\begin{align*} 		
		\widetilde M = (M - M^{\tau_k}) \indicator{\{\tau_k \geq \rho_1\}}.
	\end{align*}
	Then we have
	\begin{align*}
		\widetilde M^-   \leq M^- +  |M^{\tau_k }| ;
	\end{align*}
	hence
	\begin{align} \label{eq:160406.1}
		\E\left[\liminf_{t \uparrow \infty} \widetilde M_t^-\right]  \leq   \E\left[\liminf_{t \uparrow \infty} M_t^-\right] + \E\left[|M_\infty^{\tau_k} | \right] < \infty.
	\end{align}
	Next, we argue that $\widetilde M$ is also a local martingale, again with localization sequence 
	$(\tau_n)_{n \in \N}$. Indeed,  for $n \in \N$, $t,h \geq 0$, and $A \in \sigalg F_t$  note that 
	\begin{align*}
		\E\left[\widetilde{M}^{\tau_n}_{t+h} \indicator{A}\right] 
			&= \E\left[\left({M}^{\tau_n}_{t+h} - {M}^{\tau_n \wedge \tau_k}_{t+h} \right)\indicator{A \cap \{\rho_1 \leq \tau_k \leq t\}}\right]   +  \E\left[\left({M}^{\tau_n}_{t+h} - {M}^{\tau_n \wedge \tau_k}_{t+h}  \right) \indicator{A \cap \{\rho_1 \leq \tau_k\} \cap \{\tau_k > t\}}\right] \\
			&=  \E\left[\left({M}^{\tau_n}_{t} - {M}^{\tau_n \wedge \tau_k}_{t}  \right)\indicator{A \cap \{\rho_1 \leq \tau_k \leq t\}}\right]   +  \E\left[\left( {M}^{\tau_n \wedge \tau_k}_{t+h} -  {M}^{\tau_n \wedge \tau_k}_{t+h} \right)\indicator{A \cap \{\rho_1 \leq \tau_k\} \cap \{\tau_k > t\}}\right] 
			\\
			&=  \E\left[\widetilde {M}^{\tau_n}_{t}  \indicator{A}\right],
	\end{align*}
	where we used the definition of $\widetilde M$, $\{\rho_1 \leq \tau_k \leq t\} \in \sigalg F_t$,  $A \cap \{\rho_1 \leq \tau_k\} \cap \{\tau_k > t\} \in \sigalg F_{\tau_k}$, and the martingale property of $M^{\tau_n}$.  Alternatively, we could have observed that $\widetilde M_\cdot = \int_0^\cdot \indicator{\{\rho_1 \leq \tau_k < s\}} \d M_s$ (using the fact that $\indicator{\{\rho_1 \leq \tau_k\}} \indicator{\rc\tau_k, \infty\lc\}}$ is bounded and predictable since it is adapted and left-continuous).  Hence, $\widetilde M$ is a local martingale of the form
	\begin{align*}
		\widetilde M = \sum_{m = 2}^{N+1}  \left(J_m \indicator{\{\rho_1 \leq \tau_k < \rho_m\}} \right)\indicator{\lc \rho_m, \infty\lc},
	\end{align*}	
	satisfying \eqref{eq:160406.1}, and the induction hypothesis yields that $\widetilde M$ is a uniformly integrable martingale. This again yields that 
	\begin{align*}
		M^{\widehat \tau_k} = M^{\tau_k} + \widetilde M
	\end{align*}
	is also a uniformly integrable martingale, proving the claim that  $(\widehat \tau_n)_{n \in \N}$ is a localization sequence for $M$.
	
	\emph{Step~2}: We want to argue that   $M_t \in \mathcal L^1$ for each $t \in [0,\infty]$. To this end, fix $t \in [0,\infty]$ and note
	\begin{align}
		\E[|M_t|] &\leq \liminf_{n \uparrow \infty} \E\left[\left|M_t^{\widehat \tau_n}\right|\right]  \label{eq:161221.1} \\
			&= 
			\E[M_0] + 2 \liminf_{n \uparrow \infty} \E\left[\left({M_t^{\widehat \tau_n}}\right)^- \right] \label{eq:161221.2} \\
			&\leq
			\E[M_0] +  2 \liminf_{n \uparrow \infty} \E\left[\left(M_\infty^{\widehat \tau_n}\right)^- \right] \label{eq:161221.3} \\
			&\leq \E[M_0] + 2 \E[M_\infty^- ] \label{eq:161221.4} \\
			&< \infty. \label{eq:161221.5}
	\end{align}	
	Here, the inequality in \eqref{eq:161221.1}  is an application of Fatou's lemma.   The equality in  \eqref{eq:161221.2}  relies on the fact that for any uniformly integrable martingale $X$  we have $\E[|X_t|] = \E[X_t^+] + \E[X_t^-] = \E[X_0] + 2 \E[X_t^-]$. The  inequality in  \eqref{eq:161221.3}  uses that $(M^{\widehat \tau_n})^-$ is a uniformly integrable submartingale, thanks to Jensen's inequality, for each $n \in \N$. The inequality   in  \eqref{eq:161221.4}  (which is, actually, an equality) uses the fact that $M_{\widehat \tau_n} \in \{0, M_\infty\}$, for each $n \in \N$, by construction of the localization sequence $(\widehat \tau_n)_{n \in \N}$. Finally, the inequality  in  \eqref{eq:161221.5} holds by assumption.
	
	\emph{Step~3}: We now argue that  $M$ is a uniformly integrable martingale. To this end, fix $t\geq 0$ and $A \in \sigalg F_t$. Observe that
	\begin{align}
		\E\left[M_{\infty} \indicator{A}\right]
		&= \lim_{n \uparrow \infty} \left( \E\left[ M_{\infty} \indicator{A \cap \{\widehat \tau_n < \rho_1 < \infty\}}\right]  +  \E\left[ M_{\infty} \indicator{A \cap \{\widehat \tau_n < \rho_1\} \cap \{\rho_1 =  \infty\}}\right] + \E\left[ M_{\infty} \indicator{A \cap \{\widehat \tau_n \geq \rho_1\}}\right] \right)  \nonumber\\
		&= \lim_{n \uparrow \infty} \E\left[M_{\infty}^{\widehat{\tau}_n} \indicator{A \cap \{\widehat \tau_n = \infty\}}\right] \label{eq:160406.2}\\
		&= \lim_{n \uparrow \infty} \left(\E\left[M_{\infty}^{\widehat{\tau}_n} \indicator{A \cap \{\widehat \tau_n > t\}}\right]  - \E\left[M_{\infty}^{\widehat{\tau}_n} \indicator{A \cap \{ t <\widehat \tau_n < \infty\}}\right] \right) \nonumber \\
		&=  \lim_{n \uparrow \infty} \E\left[M_{t}^{\widehat{\tau}_n} \indicator{A \cap \{\widehat \tau_n > t\}}\right]  \label{eq:160406.3}\\ 
		&= 		\E\left[M_{t} \indicator{A}\right].   \label{eq:160406.4}
	\end{align}
	We obtained the equality in \eqref{eq:160406.2} since $\widehat \tau_n = \infty$ on the event $\{\widehat \tau_n \geq \rho_1\}$, and since the first  term on the left-hand side is zero by the dominated convergence theorem and  the second one thanks to the form of $M$.  
	 In \eqref{eq:160406.3}, we used the martingale property of $M^{\widehat \tau_n}$ in the first term and the fact that $M_{\widehat {\tau}_n} = 0$ on the  event $\{\widehat\tau_n <\infty\}$ in the second term, for each $n \in \N$.  Finally, we exchanged limit and expectation in \eqref{eq:160406.4}  again by an application of the dominated convergence theorem.  This then concludes the proof.
\end{proof}

\section{Two examples concerning the assumptions  in Theorem~\ref{T:finite1} }

\begin{example}
	Assume $(\Omega, \sigalg F, \P)$ allows for a sequence $(\theta_m)_{m \in \N}$ of independent random variables with $\P[\theta_1=2]=1$ and $\P[\theta_m = -1] = \sfrac{1}{2} =  \P[\theta_m = 1]$ for all $m \geq 2$.  Fix families $(J_m)_{m \in \N}$ and $(\rho_m)_{m \in \N}$ of random variables with 
	\begin{align*}
		J_m = 2^{m-2} \theta_{m} \qquad \text{and} \qquad \rho_m = (1-\sfrac{1}{m}) \indicator{\bigcap_{k=2}^{m-1} \{\theta_k = 1\}}  + \infty \indicator{\bigcup_{k=2}^{m-1} \{\theta_k = -1\}}.
	\end{align*}
	Next, define $M$ as in \eqref{eq:160407.1} with $N = \infty$ and assume that $(\sigalg F_t)_{t \geq 0}$ is the filtration generated by $M$.  Then $M$ is a local martingale, with localization sequence $(\rho_m)_{m \in \N}$.  Indeed, $M$ is a process that starts in one, and then, at times $\sfrac{1}{2}, \sfrac{2}{3}, \cdots$ doubles its value  or jumps to zero, each with probability $\sfrac{1}{2}$.  Since it eventually jumps to zero as $\P[\bigcup_{m = 2}^\infty \{\theta_m = -1\}] = 1$, we have $M_1= 0$.   In particular, $M$ is not a true martingale, but satisfies $\E[M_1^-] = 0 < \infty$.  Thus, the assertions of Theorem~\ref{T:finite1} or Corollary~\ref{C:finite1} are not valid if $N = \infty$, even if  $\P[\bigcup_{m \in \N} \{\rho_m=\infty\}] = 1$.  \qed
\end{example}

The next example illustrates that the assumptions of Corollary~\ref{C:finite1} are not sufficient to guarantee that $M$ is a uniformy integrable martingale, even if there is only one jump possible, that is, even if $N=1$.  The example is adapted from \cite{Ruf_Cherny}, where it is used as a counterexample for a different conjecture.

\begin{example}  \label{Ex:7}
Let $\rho$ be an $\N \cup \{\infty\}$--valued random variable with 
\begin{align*}
	\P\left[\rho = i\right] = \frac{1}{2 i^2}, \qquad i \in \N.
\end{align*}
This then yields that
\begin{align*}
\P\left[\rho = \infty\right] = 1 - \frac{\pi^2}{12}.
\end{align*}
Moreover, let $\theta$ be an independent $\{-1,1\}$ valued random variable with $\P[\theta = 1] = \P[\theta = -1] = \sfrac{1}{2}$. Define $J = \theta \rho^2$. Then the stochastic process
\begin{align*}
	M = J \indicator{\lc \rho, \infty\lc},
\end{align*}
along with the filtration $(\sigalg F_t)_{t \geq 0}$ it generates, satisfies exactly the conditions of Corollary~\ref{C:finite1}.  Indeed, $\rho$ is an $\filt F$--stopping time and $M^-_t \leq \rho^2 \indicator{\{\rho \leq t\}} \leq t^2$, hence $M^-_t \in \mathcal L^1$ for each $t \geq 0$.  Thus, $M$ is a martingale. This fact would also be very easy to check by hand.

We have $M_\infty = \lim_{t \uparrow \infty} M_t$ exists and satisfies $|M_\infty| = \rho^2 \indicator{\{\rho<\infty\}}$. 
Thus,
\[
	\E[|M_\infty|] =  \sum_{i \in \N} i^2 \frac{1}{2 i^2} = \infty,
\]
and $M$ cannot be a uniformly integrable martingale. 
 \qed
\end{example}

\bibliography{aa_bib}
\bibliographystyle{chicago}
\end{document}